\documentclass[11pt,oneside,reqno]{amsart}%
\usepackage{amsfonts}
\usepackage{amsmath}
\usepackage{amssymb}
\usepackage{graphicx}
\usepackage{epstopdf} 
\usepackage{version}
\usepackage{bm}
\usepackage{enumerate}
\usepackage{epsfig}
\usepackage{setspace}
\usepackage{caption}
\usepackage{color}
\usepackage{pgf,tikz,pgfplots}
\pgfplotsset{compat=1.15}

\DeclareCaptionLabelFormat{cont}{#1~#2\alph{ContinuedFloat}}

\captionsetup[ContinuedFloat]{labelformat=cont}

\setcounter{MaxMatrixCols}{30}
\addtolength{\textwidth}{2cm} \addtolength{\textheight}{3cm}
\addtolength{\oddsidemargin}{-1cm}
\addtolength{\topmargin}{-1cm}
\theoremstyle{plain}
\newtheorem{theorem}{Theorem}[section]
\newtheorem{corollary}[theorem]{Corollary}
\newtheorem{lemma}[theorem]{Lemma}
\newtheorem{proposition}[theorem]{Proposition}

\newtheorem{assumption}[theorem]{Assumption}
\newtheorem{remark}[theorem]{Remark}

\newtheorem*{notation*}{Notation}
\numberwithin{equation}{section}
\excludeversion{comment1}

\let\<\langle
\let\>\rangle
\begin{document}
\title[]{Global and explicit approximation of piecewise smooth 2D functions from cell-average data}

\author{Sergio Amat}
\address{Departamento de Matemática Aplicada y Estadística. Universidad  Polit\'ecnica de Cartagena. Cartagena, Spain.}
\email{sergio.amat@upct.es}

\author{David Levin}
\address{School of Mathematical Sciences. Tel-Aviv University. Tel-Aviv, Israel.}
\email{levindd@gmail.com}

\author{Juan Ruiz-Alv\'arez}
\address{Departamento de Matemática Aplicada y Estadística. Universidad  Polit\'ecnica de Cartagena. Cartagena, Spain.}
\email{juan.ruiz@upct.es}

\author{Dionisio F. Y\'a\~nez}
\address{Departamento de Matem\'aticas, Universidad de Valencia. Valencia, Spain}
\email{dionisio.yanez@uv.es}

\maketitle

\begin{abstract}
Given cell-average data values of a piecewise smooth bivariate function $f$ within a domain $\Omega$, we look for a piecewise adaptive approximation to $f$. We are interested in an explicit and global (smooth) approach. Bivariate approximation techniques, as trigonometric or splines approximations, achieve reduced approximation orders near the boundary of the domain and near curves of jump singularities of the function or its derivatives. Whereas the boundary of $\Omega$ is assumed to be known, the subdivision of $\Omega$ to subdomains on which $f$ is smooth is unknown.
The first challenge of the proposed approximation algorithm would be to find a good approximation to the curves separating the smooth subdomains of $f$.
In the second stage, we simultaneously look for approximations to the different smooth segments of $f$, where on each segment we approximate the function by a linear combination of basis functions $\{p_i\}_{i=1}^M$, considering the corresponding cell-averages.
A discrete Laplacian operator applied to the given cell-average data intensifies the structure of the singularity of the data across the curves separating the smooth subdomains of $f$. We refer to these derived values as the signature of the data, and we use it for both approximating the singularity curves separating the different smooth regions of $f$. The main contributions here are improved convergence rates to both the approximation of the singularity curves and the approximation of $f$, an explicit and global formula, and, in particular, the derivation of a piecewise smooth high order approximation to the function.
\end{abstract}
\maketitle

\section{Introduction}

In some applications, e.g., in some numerical methods for solving flow problems, or in medical imaging, our raw data is in the form of cell-averages on a grid of cells. The underlying function $f$ is usually piecewise smooth, and the ultimate challenge is to find a high order piecewise smooth approximation to the function, including a high order approximation of the 'singularity' boundaries separating the domains of smooth pieces. This is the subject of the eminent paper \cite{ACDDM} by Ar\`andiga, Cohen, Donat, Dyn, and Matei, dealing with the 2D case, providing local approximations to the singularity curves, and local approximations of the underlying function $f$. The approach proposed is based on an edge-adapted nonlinear
reconstruction technique extending to the two-dimensional case
the essentially non-oscillatory subcell resolution technique introduced in
the one-dimensional setting by Harten \cite{Harten,ACDD}. It is
shown that for general classes of piecewise smooth functions, edge-adapted reconstructions
yield approximations with optimal rate of convergence.

The first step in \cite{ACDDM} is a detection and selection
mechanism generating a set of cells that are suspected
to be crossed by an edge and removing some cells
of this set to eliminate false alarms. The procedure is exact for step images.
In the present work, extending the ideas in \cite{ACDDM}, under some smoothness conditions, we derive an improved approximation to the singularity curves, improving the $O(h^2)$ approximation order in \cite{ACDDM} to an $O(h^3)$ approximation order. Our model is exact for piecewise linear functions with quadratic
edges. Furthermore, we introduce an $O(h^3)$ global implicit approximation of the singularity curves. We also suggest a simple approach for deriving piecewise-smooth and global approximations of the different smooth pieces of $f$. Combining both approximation results,
to the singularity curve and $f$, the Hausdorff distance between the graph of the function and the graph of the approximation is $O(h^3)$.

The paper is organized as follows: Section 2 recalls the cell-average data including the 2d case, a first approximation of the singularity curves is
described in Section 3, we derive a global approximation using the signature of the data, in Section 4 we propose a local high order approximation of the curve, this approximation is used to derive a final high order global approximation of the singularity curve. The full reconstruction algorithm is presented and analyzed theoretically in Section 5, in particular, a global, explicit, and piecewise-smooth approximation is obtained.

\section{Cell-average data}

\subsection{The univariate case}\hfill
\medskip

Let $f$ be a piecewise smooth function on $[0,1]$ with a jump discontinuity at $x=s$.
Given cell-average data
\begin{equation}\label{discretization}
 \bar{f}_j=\frac{1}{h}\int^{jh}_{(j-1)h}f(x)dx,\quad j=1, \cdots, N, \ \ h=1/N,
\end{equation}
we would like to construct a high order approximation to $f$, including a good approximation to the discontinuity location $s$.

Let us now define the sequence $\{F_j\}$ as
\begin{equation}\label{primitive}
F_j=h\sum_{i=1}^{j}\bar{f}_i=\int_{0}^{jh}f(y)dy.
\end{equation}
Consider the primitive of $f$, $F(x)=\int_0^xf(y)dy$, then the values $\{F_j\}_{j=0}^N$ represent a point-value discretization of $F$. Conversely, the cell-average values can be expressed in terms of the point values of the primitive through the expression,
\begin{equation}\label{diff}
\bar{f}_j=\frac{F_j-F_{j-1}}{h}.
\end{equation}
In \cite{ALR} we presented a non-linear subdivision algorithm that is adapted to the discontinuities, avoids the Gibbs phenomenon, attains the same regularity in smooth zones, as the equivalent linear algorithm, without diffusion and that presents $O(h^4)$ order of accuracy.

The key observation which stands at the base of the algorithm in \cite{ALR} is that the jump discontinuity in $f$ is reduced into a jump in the derivative of $F$ at $x=s$. From this, it follows that the location of the discontinuity can be obtained with high approximation order.

\subsection{The bivariate case}\hfill
\medskip

Let $f$ be a piecewise smooth function on $[0,1]^2$, defined by the combination of two pieces $f_1\in C^m[\Omega_1]$ and $f_2\in C^m[\Omega_2]$, $\Omega_2=[0,1]^2\setminus \Omega_1$. We further assume that the subdomains $\Omega_1$ and $\Omega_2$ are separated by a smooth curve $\Gamma$.

We assume that we are given cell-average values of $f$
\begin{equation}\label{2DAC}
 \bar{f}_{i,j}=\frac{1}{h^2}\int^{ih}_{(i-1)h}\int^{jh}_{(j-1)h}f(x)dxdy\quad i,j=1, \cdots, N, \ \ h=1/N,
\end{equation}
and we do not know the sets $\Omega_1$ and $\Omega_2$.
We would like to construct a high order approximation to $f$, including a good approximation to the separating curve $\Gamma$. All along the paper we assume the following conditions hold for $f_1$, $f_2$ and the separating curve $\Gamma$.

\begin{assumption}\label{extension}{\bf Regularity conditions}\hfill

\begin{enumerate}
\item  Both functions, $f_1$ and $f_2$, have $C^4$ extensions over $[-\varepsilon,1+\varepsilon]^2$ with $\varepsilon>0$.
\item The curve $\Gamma$ is a $C^4$ curve which is nowhere tangent to the boundary of $[0,1]^2$.
\item Let $min[f]_\Gamma$ be the minimal jump $|f_1-f_2|$ in $f$ across $\Gamma$, then $min[f]_\Gamma\ge \delta>0$.
\end{enumerate}
\end{assumption}

Following the 1-D case we may define the aggregated data sequence $\{F_{k,\ell}\}$ as
\begin{equation}\label{2Dprimitive}
F_{k,\ell}=h^2\sum_{i=1}^{k}\sum_{j=1}^{\ell}\bar{f}_{i,j}=\int_{0}^{\ell h}\int_{0}^{kh}f(x,y)dxdy.
\end{equation}

Consider the `2-D primitive' of $f$, $F(x,y)=\int_0^x\int_0^yf(s,t)dtds$, then the values $\{F_{k,\ell}\}_{k,\ell=0}^N$ represent point-value discretization of $F$. Conversely, the cell-average values can be expressed in terms of the point values of the primitive through the expression,
\begin{equation}\label{diff2D}
\bar{f}_{i,j}=\frac{1}{h^2}(F_{i,j}-F_{i-1,j}-F_{i,j-1}+F_{i-1,j-1}).
\end{equation}

As in the 1D case, the function $F$ is smoother than $f$, however,
unlike the 1-D case, the singularity structure of $F$ may be more complicated than that of $f$. That is, jumps in the derivatives of $F$ may occur across $\Gamma$ and across some additional lines tangent to $\Gamma$. Therefore, we adopt here a different approach for the approximation of $\Gamma$, directly using the cell-average data.

The main challenge is the detection and the approximation of $\Gamma$. A constructive local approach is presented in \cite{ACDD}, starting with the detection of cells that are crossed by $\Gamma$. This is done by adopting the ideas of Harten in \cite{Harten}, looking for high values in first-order differences in both $x$ and $y$ directions. In the next section, we suggest using the discrete Laplacian operator as an indicator of local singularities.

\section{Approximating the singularity curve}

\subsection{The signature of the data}\hfill

\medskip
Given the cell-average data ${\bf \bar{f}}=\{\bar{f}_{i,j}\}_{i,j=1}^N$, we define the signature of ${\bf \bar{f}}$ by applying a discrete Laplacian operator to the data:

\begin{equation}\label{sigf}
\sigma({\bf \bar{f}})=\{s_{i,j}\}_{i,j=2}^{N-1}\ ,
\end{equation}
where
\begin{equation}\label{sigf2}
s_{i,j}=\bar{f}_{i-1,j}+\bar{f}_{i,j-1}-4\bar{f}_{i,j}+\bar{f}_{i+1,j}+\bar{f}_{i,j+1},
\end{equation}
is the signature value attached to the cell
\begin{equation}\label{cij}
c_{i,j}=[(i-1)h,ih]\times[(j-1)h,jh],
\end{equation}
or to the center of the cell,
\begin{equation}\label{pij}
p_{i,j}=((i-1/2)h,(j-1/2)h).
\end{equation}
We would like to distinguish between regular cells, which are not crossed by $\Gamma$, and irregular cells which are crossed by $\Gamma$.
It turns out that the signature values are $O(h^2)$ as $h\to 0$ in areas where $f$ is $C^2$,
they are $O(h)$ where $f$ has a discontinuity in the first derivative, and they are $O(1)$ where $f$ has jump discontinuity.
Below we will use these properties of the signature values for both identifying and approximating the singularity curve $\Gamma$, and for approximating the function $f$.

Thus, heuristically, the big signatures are related to cells near the singularity curves.
For clarity, we choose to describe this fact alongside a specific numerical example.

\subsection{A numerical example}

Consider the piecewise smooth function on $[0,1]^2$ with a jump singularity across the curve $\Gamma$ which is the quarter circle defined by $x^2+y^2=0.5$. The test function is shown in Figure \ref{2Dtestf} and is defined as
\begin{equation}\label{testf2Dnonsmooth}
f(x,y)=
\begin{cases}
x+y& x^2+y^2 < 0.5,\\
1+0.5\sin(3y) & x^2+y^2\ge 0.5\\
\end{cases}
\end{equation}

\begin{figure}[!ht]
    \includegraphics[width=4in]{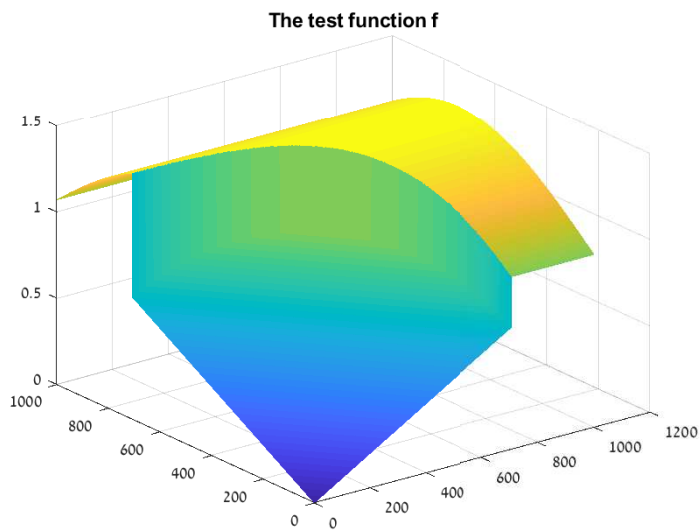}
    \caption{An example of a 2D non-smooth function}
    \label{2Dtestf}
\end{figure}

Next, in Figure \ref{2DtestfCA},  we present the $N\times N$ cell-average data derived from the above test function computed for $N=40$, $h=0.025$.

\begin{figure}[!ht]
    \includegraphics[width=4in]{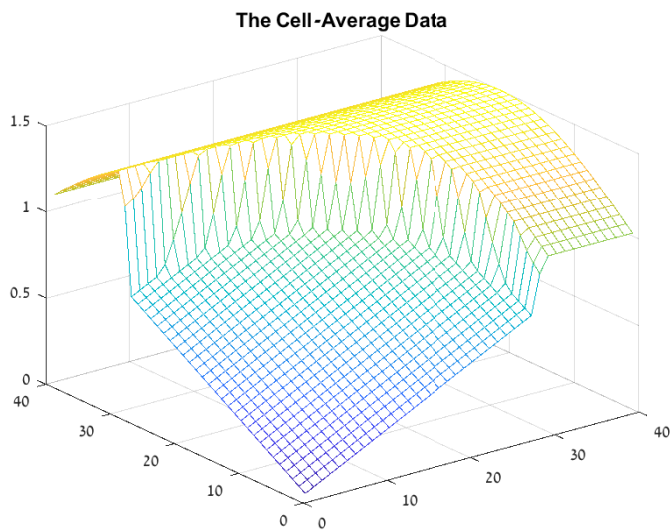}
    \caption{Cell-average data of a 2D non-smooth function}
    \label{2DtestfCA}
\end{figure}

In Figure \ref{sig2DtestfCA} we display the signature of the cell-average data. Large values are obtained at cells near the singularity curve.

\begin{figure}[!ht]
    \includegraphics[width=4in]{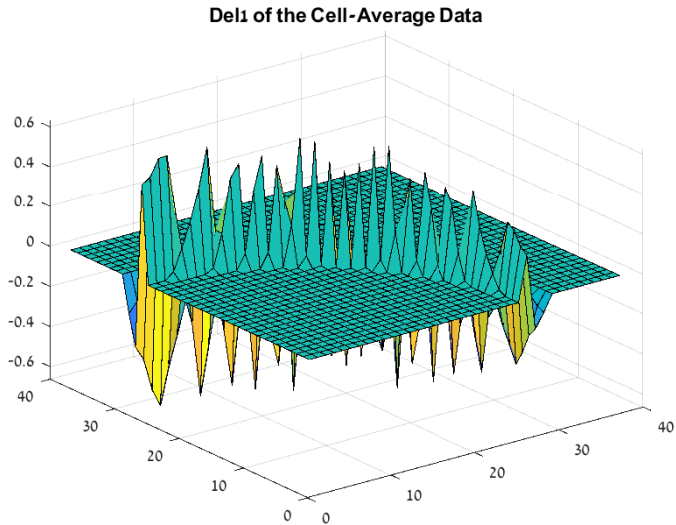}
    \caption{The signature of the Cell-average data of a 2D non-smooth function}
    \label{sig2DtestfCA}
\end{figure}

\begin{figure}[!ht]
    \includegraphics[width=4in]{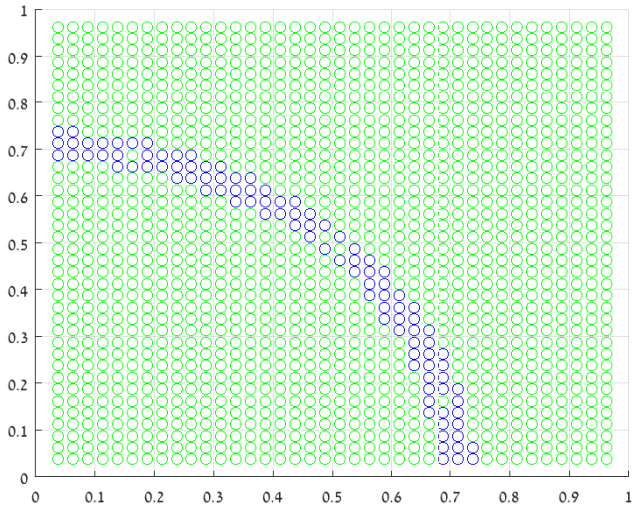}
    \caption{Cells with large signature values, in blue}
    \label{P1P2Q}
\end{figure}

We denote the cells with large signature values, i.e. $\ge 0.1 max(|\sigma({\bf \bar{f}})|)$ by $U_0$. These cells, depicted in blue in Figure \ref{P1P2Q}, serve us for sorting the rest of the cells, the green cells, into two sets $U_1$ and $U_2$ in either side of the singularity curve $\Gamma$. The sets $U_0$, $U_1$, and $U_2$ are considered both as sets of cells and as sets of midpoints of cells.

Now, we would like to propose an algorithm based on theoretical shreds of evidence, allowing us to develop a full theoretical analysis of our approach.

\medskip
By the definition of the signature operator, the following observations follow:
\begin{lemma}\label{Oh2dist}
Let $M=max(sup_{\Omega_1}\{|f_{1,xx}|+|f_{1,yy}|\}, sup_{\Omega_2}\{|f_{2,xx}|+|f_{2,yy}|\})$. Then for a cell $c_{i,j}$ centered at $p_{i,j}$, if $dist(c_{i,j},\Gamma)\ge h$
$$|s_{i,j}|\le Mh^2,$$
and if $h/2<dist(p_{i,j},\Gamma)\le h$
$$|s_{i,j}|\ge \delta_{i,j}/2$$
for a small enough $h$, where $\delta_{i,j}$ is the magnitude of the jump in $f$ across $\Gamma$ at the closest point to $p_{i,j}$.
\end{lemma}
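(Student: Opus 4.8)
The two assertions decouple, so the plan is to treat the regular cells and the cells adjacent to $\Gamma$ separately.

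For the upper bound I would first localize and then read $s_{i,j}$ as a discrete Laplacian. Every point of the four neighbouring cells $c_{i\pm1,j},c_{i,j\pm1}$ lies within distance $h$ of $c_{i,j}$, so the hypothesis $\mathrm{dist}(c_{i,j},\Gamma)\ge h$ forces the whole five--cell stencil into the closure of a single smooth subdomain, and all averages in \eqref{sigf2} are then cell--averages of one $C^4$ piece, say $f_1$. Writing $\bar f(q)=h^{-2}\int_{q+[-h/2,h/2]^2}f_1$ for the cell--average as a function of the cell centre $q$, the expression $s_{i,j}$ is exactly the five--point Laplacian $\bar f(q+he_1)+\bar f(q-he_1)+\bar f(q+he_2)+\bar f(q-he_2)-4\bar f(q)$ evaluated at $q=p_{i,j}$, with $e_1,e_2$ the coordinate unit vectors. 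I would then apply, in each coordinate, the exact identity $\phi(a+h)-2\phi(a)+\phi(a-h)=\int_{-h}^{h}(h-|t|)\phi''(a+t)\,dt$ and use that averaging commutes with differentiation, $\bar f_{xx}=\overline{f_{1,xx}}$ and $\bar f_{yy}=\overline{f_{1,yy}}$, to obtain $s_{i,j}=h^2\,\overline{(f_{1,xx}+f_{1,yy})}(p_{i,j})+O(h^4)$; bounding the leading average by $\sup_{\Omega_1}(|f_{1,xx}|+|f_{1,yy}|)\le M$ then yields $|s_{i,j}|\le Mh^2$.

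For the lower bound near $\Gamma$ the plan is to split off the jump. Extending $f_1$ smoothly across $\Gamma$ and writing $f=f_1+g\,\mathbf 1_{\Omega_2}$ with $g=f_2-f_1$, linearity of the signature operator gives $s_{i,j}=s_{i,j}[f_1]+s_{i,j}[g\,\mathbf 1_{\Omega_2}]$, and the first term is $O(h^2)$ by the argument above. Letting $\alpha_c=h^{-2}\,\mathrm{area}(c\cap\Omega_2)\in[0,1]$ be the area fraction of a stencil cell inside $\Omega_2$, and using that $g$ is Lipschitz with $\alpha_c\le1$, I would replace $g$ by its value $g_0$ at the point of $\Gamma$ closest to $p_{i,j}$ (where $|g_0|=\delta_{i,j}$) at a cost $O(h)$ per cell, reducing the second term to $s_{i,j}=g_0\,S_\alpha+O(h)$ with $S_\alpha=\alpha_{i-1,j}+\alpha_{i,j-1}-4\alpha_{i,j}+\alpha_{i+1,j}+\alpha_{i,j+1}$. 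Everything then hinges on a lower bound for $|S_\alpha|$.

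The hard part will be this geometric lower bound, and in particular its uniformity in the interface orientation. To leading order $\Gamma$ may be replaced by its tangent line, the $C^4$ curvature perturbing each $\alpha_c$ only by $O(h)$; for a straight interface $\alpha_c=\Phi(\rho_c)$ is a function of the signed distance alone, and $S_\alpha$ becomes the sum of two centred second differences of $\Phi$ at $-d$ with steps $h|\cos\theta|$ and $h|\sin\theta|$, where $\theta$ is the angle of the normal and $d=\mathrm{dist}(p_{i,j},\Gamma)$. In the axis--parallel model this collapses to a one--dimensional computation: for $d\in(h/2,h]$ the centre cell and the two far neighbours carry fraction $0$ while the single crossed neighbour carries $3/2-d/h\in[1/2,1)$, so $S_\alpha\ge1/2$ and $|s_{i,j}|\ge\delta_{i,j}/2$ once the $O(h)$ remainder is absorbed for small $h$; this is the configuration the stated constant $1/2$ reflects. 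The genuine obstacle is that for oblique interfaces the two second differences spread the jump over several cells, so $S_\alpha$ is strictly smaller than in the axis--parallel case (a $45^\circ$ normal at $d=h$ already gives $S_\alpha\approx0.34$), and a fully orientation--uniform bound holds only with a positive constant somewhat below $1/2$. I would close this gap by minimising the two second differences of $\Phi$ over $\theta$ and $d$, using the convexity of $\Phi$ on $\{\rho<0\}$ --- equivalently the monotonicity of the chord--length profile of a square --- to guarantee that the two contributions cannot cancel and remain bounded below by an explicit positive constant, the value $1/2$ being attained exactly in the one--dimensional worst case.
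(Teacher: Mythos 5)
The paper offers no proof of this lemma at all: it is introduced only by the phrase ``By the definition of the signature operator, the following observations follow,'' so your proposal is not competing with an argument in the paper --- it is supplying the missing one. Your two-part strategy is the natural choice. The upper-bound half is correct and essentially complete: the hypothesis $dist(c_{i,j},\Gamma)\ge h$ does force the five-cell stencil into a single smooth piece, and the integral representation of the second differences together with commutation of cell-averaging and differentiation gives $s_{i,j}=h^2\,\overline{\Delta f_1}(p_{i,j})+O(h^4)$, hence $|s_{i,j}|\le Mh^2+O(h^4)$. (Strictly speaking, the extra $O(h^4)$ term --- or, if one instead bounds the two directional integrals separately, the replacement of $\sup(|f_{xx}|+|f_{yy}|)$ by $\sup|f_{xx}|+\sup|f_{yy}|$ --- means the clean bound $Mh^2$ holds only up to higher-order corrections; that slack sits in the lemma as stated, not in your argument.)

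The valuable part of your proposal is the lower bound, and there your analysis is correct and exposes a genuine defect in the lemma: the constant $1/2$ is not attainable uniformly in the orientation of $\Gamma$. Your decomposition $f=f_1+g\,\mathbf 1_{\Omega_2}$, the reduction to the area-fraction Laplacian $S_\alpha$ up to $O(h)$ errors (curvature of $\Gamma$ and the variation of $g$ each contribute only $O(h)$), and your worst-case computation all check out: for a straight interface whose normal makes a $45^\circ$ angle with the axes, at distance $d=h$ from $p_{i,j}$, the center cell and two of the neighbours carry fraction $0$ while each of the two crossed neighbours carries $(2-\sqrt2)^2/2=3-2\sqrt2$, so $S_\alpha=6-4\sqrt2\approx 0.343<1/2$, and $|s_{i,j}|\to(6-4\sqrt2)\,\delta_{i,j}$ as $h\to 0$. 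Thus the stated inequality $|s_{i,j}|\ge\delta_{i,j}/2$ fails for small $h$ in this configuration (and even in the axis-parallel case at $d=h$ the limit is exactly $\delta_{i,j}/2$, so the $O(h)$ remainder can already violate it). The lemma is true only with a smaller absolute constant --- any $c_0<6-4\sqrt2$, for instance $\delta_{i,j}/3$ --- and the step you defer, a uniform positive lower bound on $S_\alpha$ over $d\in(h/2,h]$ and all orientations, is routine to complete by compactness and continuity, with the corner configuration as the minimizer you identify. One downstream consequence worth recording: the Detection Algorithm in the paper declares $c_{i,j}\in U_0$ when $|s_{i,j}|\ge\delta/2$, so with the correct constant, cells at distance close to $h$ from an obliquely crossing $\Gamma$ can escape detection; the threshold should be lowered (to $\delta/4$, say), which does not harm the rest of the construction.
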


\medskip
Recalling Assumption \ref{extension}(3), we suggest the following detection algorithm:

\medskip
{\bf The Detection Algorithm}

\medskip
We let $c_{i,j}\in U_0$ if

$$|s_{i,j}| \ge \delta/2.$$

Using Lemma \ref{Oh2dist} it follows that detection algorithm identifies cells which are near $\Gamma$ if $h<\kappa h_c$ where
$$h_c= \sqrt{\frac{\delta}{M}},$$
where $\kappa$ is small enough.

\medskip

In practice, we need an estimation of $\delta$. By definition, it is smaller than the minimal jump $|f_1-f_2|$ in $f$ across $\Gamma$.
The first step is to distinguish the jumps computed in smooth regions $U_1$ or $U_2$ from the jumps crossing the singularity curve.
Let $h^{'}_c$ small enough such that $$\max_{U_1 \bigcup U_2} \{ ||\nabla f||_1 \} h^{'}_c < \sqrt{h^{'}_c}$$ or equivalently $$\max_{U_1 \bigcup U_2} \{ ||\nabla f||_1 \}  < \frac{1}{\sqrt{h^{'}_c}}.$$ Then, we have
$|\bar{f}_{i,j}-\bar{f}_{i^{'},j^{'}}| < \sqrt{h^{'}_c}$ for
all $(i,j)$ and $(i^{'},j^{'})$ in a neighborhood of  $U_1$ or $U_2$. In particular, the jumps crossing $\Gamma$ will be greater than $\sqrt{h^{'}_c}$. Therefore, we can propose
$$\delta^*=\min \{ |\bar{f}_{i,j}-\bar{f}_{i^{'},j^{'}}|  \ s.t. \  |\bar{f}_{i,j}-\bar{f}_{i^{'},j^{'}}| > \sqrt{h^{'}_c}  \} $$
as an approximation of $\delta$. In fact, when the differences cross $\Gamma$ its absolute value is $O(1)$ (bigger than $\frac{\delta}{2} >\sqrt{h^{'}_c}$).

Then, we can take $h\leq \kappa \min \{ h_c, h^{'}_c\}$ where
$$\max_{U_1 \bigcup U_2} \{ ||\nabla f||_1 \}  < \frac{1}{\sqrt{h^{'}_c}}$$
and
$$
h_c= \sqrt{\frac{\delta^*}{M}}.
$$

\subsubsection{A global implicit approximation of $\Gamma$}\label{AppGamma}\hfill

\medskip
Similar to the approach in \cite{ALR2} we define the approximation to the singularity curve $\Gamma$ as the zero level curve of a bivariate spline function approximating a proper signed-distance function. Referring to the sets $U_1$, $U_2$ and $U_0$ defined above, we attach to each point $p\in U_1$ the value $dist(p,U_0)+h$ and to each point $p\in U_2$ the value $-dist(p,U_0)-h$. The addition of $\pm h$ is due to the observation that the `width' of the set $U_0$ is approximately $2h$, which is the size of the support of the discrete Laplacian operator defining the signature. Now we find a smooth bivariate spline on $[0,1]^2$ approximating the above data on $U_1$ and $U_2$ in the least-squares sense. The zero level curve of this spline is the suggested approximation to $\Gamma$.
Using a bi-cubic spline $S_3(x,y)$ with knots distance 0.25, we derive the approximation to $\Gamma$ presented in Figure \ref{AGamma}, where the exact $\Gamma$ is depicted in blue and the approximation in red. We observe that the approximation is deteriorating near the two ends of the curve. A better approximation, using the same parameters, is displayed in Figure \ref{AGamma2} for the case of a closed singularity curve $\Gamma_2$ defined by

\begin{equation}\label{Gamma2}
(x-0.5)^2+(y-0.5)^2=0.1.
\end{equation}

\begin{figure}[!ht]
    \includegraphics[width=4in]{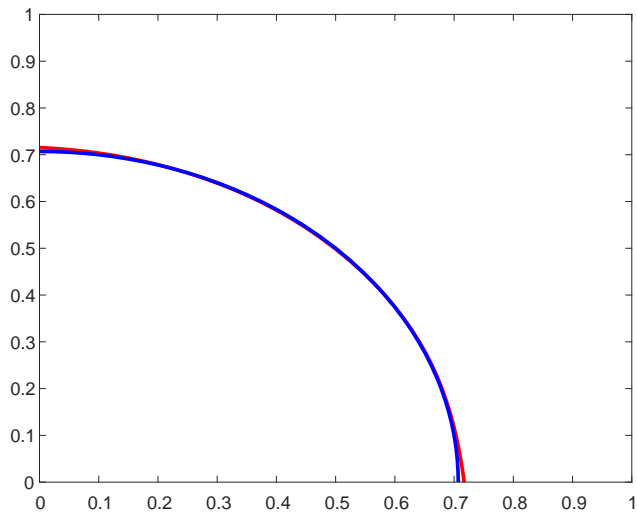}
    \caption{The singularity curve in blue and its approximation in red}
    \label{AGamma}
\end{figure}

\begin{figure}[!ht]
    \includegraphics[width=4in]{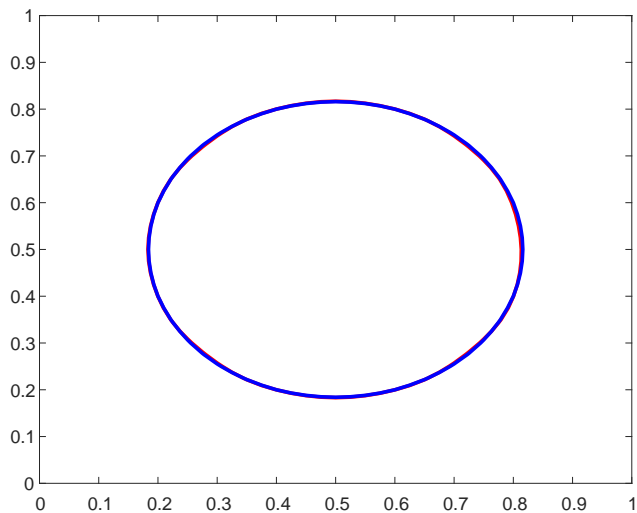}
    \caption{Approximation of a smooth closed singularity curve.}
    \label{AGamma2}
\end{figure}

The above procedure gives a global approximation of the singularity curve with an $O(h)$ approximation order. A local piecewise linear method for approximating the singularity curve is presented in \cite {ACDDM}, and this is already an $O(h^2)$ method. In the next section, we extend the ideas in \cite {ACDDM} to finding an $O(h^3)$ local approximations of the singularity curve.
Further on, we use the local approximations for deriving a global $O(h^3)$ approximation of the curve.

\section{Quadratic approximation to $\Gamma$.}

\bigskip
In this section, we introduce a quadratic approximation of the singularity curve.

\begin{figure}[!ht]
    \includegraphics[width=4in]{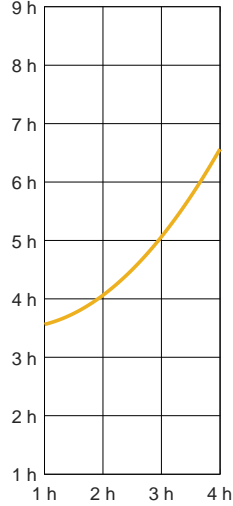}
    \caption{A zoom on a neighborhood of $\Gamma$}
    \label{gridplusquad}
\end{figure}

\medskip
In \cite{ACDDM} the authors present an algorithm that is exact for step images with a linear $\Gamma$. We suggest making an algorithm that is exact for an image that is linear on each side of $\Gamma$, and $\Gamma$ is a quadratic function. It seems that this is the minimal for obtaining an $O(h^3)$ approximation to $\Gamma$.

Assume the function $f$ below $\Gamma$ is
$$L_1(x,y)=\alpha_1x+\beta_1 y+\gamma_1,$$
and the function above $\Gamma$ is
$$L_2(x,y)=\alpha_2x+\beta_2 y+\gamma_2.$$
Assume also that $\Gamma$ is passing between $y=3h$ and $y=7h$ as in Figure \ref{gridplusquad} and is given by the equation
$$q(x)=ax^2+bx+c.$$

We are given the function cell-averages $\bar f_{ij}$ defined in (\ref{2DAC}). Using these values we would like to retrieve the parameters of the linear-quadratic model. The suggested algorithm is as follows:

\begin{itemize}
\item
First, we find $\alpha_1,\beta_1,\gamma_1$ by matching the cell-averages of the linear model to the given cell-averages $\bar f_{3,2}, \bar f_{3,3}, \bar f_{4,3}$.
This gives us a linear system for the 3 unknowns.
\item
Second, we find $\alpha_2,\beta_2,\gamma_2$ by matching the cell-averages of the linear model to the given cell-averages $\bar f_{3,9}, \bar f_{3,8}, \bar f_{4,8}$.
\item
Third, using the function parameters computed above, we look for the curve parameters $a,b,c$. This is done by matching the cell-averages of the linear model to the given cell-averages on three triplets of cells $\bar f_{2,4}+\bar f_{2,5}+ \bar f_{2,6}+ \bar f_{2,7}$, $\bar f_{3,4}+\bar f_{3,5}+ \bar f_{3,6}+ \bar f_{3,7}$, $\bar f_{4,4}+\bar f_{4,5}+ \bar f_{4,6}+ \bar f_{4,7}$. This matching gives a system of three quadratic equations in the unknowns $a,b,c$.
\end{itemize}

\subsection{The system of quadratic equations}

\begin{equation}\label{quadsystem}
Q_i(a,b,c)=\bar f_{i,4}+\bar f_{i,5}+\bar f_{i,6}+\bar f_{i,7},\ \ \ \ i=2,3,4.
\end{equation}

where
\begin{equation}\label{Qidef}
Q_i(a,b,c)\equiv h^{-2}\left(\int^{ih}_{(i-1)h}\int^{q(x)}_{3h}(L_1(x,y)-L_2(x,y))dydx+
 \int^{ih}_{(i-1)h}\int^{7h}_{3h}L_2(x,y)dydx\right).
\end{equation}

Denoting $L_1(x,y)-L_2(x,y)=\alpha x+\beta y+\gamma$, let us expand the first integral above:
$$\int^{ih}_{(i-1)h}\int^{q(x)}_{3h}(\alpha x+\beta y+\gamma)dydx=\int^{ih}_{(i-1)h}[(\alpha x+\gamma)(q(x)-3h)+\beta/2(q(x)^2-(3h)^2)]dx.$$

Approximating any other segment of $\Gamma$ can be done by transforming the coordinates so that the problem is defined in $[h,3h]\times [h,9h]$ as in Figure \ref{gridplusquad}.
Thus we look for $q(x)=ax^2+bx+c$ such that $q:[2h,3h]\ \to\ [3h,7h]$. Observing that $c=O(h)$ we may rewrite $q$ as
\begin{equation}\label{ch}
q(x)=ax^2+bx+ch,
\end{equation}
where now the unknown parameters $a$, $b$ and $c$ are $O(1)$ as $h\to 0$.

Since $q(x)=ax^2+bx+ch$, the above integral is a quadratic form in the unknowns $a,b,c$.

\subsection{Local approximation order}\hfill

\medskip
Let us assume that $\Gamma$ is locally defined by a $C^3$ function $g(x)$, and let the function $f$ be $f_1\in C^2$ below $\Gamma$ and $f_2\in C^2$ above $\Gamma$. Consider the same local neighborhood $R$ of $\Gamma$ described in Figure \ref{gridplusquad}. Within $R$ there exists a local quadratic approximation $\tilde q(x)\sim g(x)$, $\tilde q(x)=\tilde ax^2+\tilde bx+\tilde ch$ such that
\begin{equation}\label{yq}
g(x)=\tilde q(x)+O(h^3).
\end{equation}
Denoting the linear functions defined above as
$\tilde L_1(x,y)\sim f_1(x,y)$ and $\tilde L_2(x,y)\sim f_2(x,y)$,
they satisfy
$$f_1(x,y)=\tilde L_1(x,y)+O(h^2),$$
$$f_2(x,y)=\tilde L_2(x,y)+O(h^2),$$
in $R$, as $h\to 0$.
Denoting by $\tilde\Gamma$ the graph of $\tilde q$ in $R$,
by $\tilde L(x,y)$ the function which is $\tilde L_1$ below $\tilde\Gamma$ and
$\tilde L_2$ above $\tilde\Gamma$, let us compare the cell-averages $\bar f_{i,j}$ of $f$ and $\bar L_{i,j}$ of $\tilde L$ within $R$. Since the area between $\Gamma$ and $\tilde\Gamma$ on a cell of size $h\times h$ is $O(h^4)$, it follows that
\begin{equation}\label{fLh2}
\bar f_{i,j}=\bar L_{i,j}+O(h^2).
\end{equation}

Let us consider the system of equations for the quadratic model for $\Gamma$. We know the system is solvable if we replace all the values $\{\bar f_{i,j}\}$ used for cell-average matching by the corresponding values $\{\bar L_{i,j}\}$. By (\ref{fLh2}), it follows that the system (\ref{quadsystem}) is an $O(h^2)$ perturbation of the solvable system
\begin{equation}\label{quadsystemL}
Q_i(\tilde a,\tilde b,\tilde c)=\bar L_{i,4}+\bar L_{i,5}+\bar L_{i,6}+\bar L_{i,7},\ \ \ \ i=2,3,4.
\end{equation}

Using this observation we would like to show that the solution of the system (\ref{quadsystem}) is an $O(h^2)$ perturbation of the solution of (\ref{quadsystemL}). Let us denote the mapping $Q:\mathbb{R}^3\to \mathbb{R}^3$, $Q(a,b,c)=(Q_1(a,b,c),Q_2(a,b,c),Q_3(a,b,c))^t$, the right hand side of the system (\ref{quadsystem}) as $\bar F\in \mathbb{R}^3$, and  the right hand side of the system (\ref{quadsystemL}) as $\bar L\in \mathbb{R}^3$.
To solve the system
\begin{equation}\label{QminusF}
Q(a,b,c)-\bar F=0,
\end{equation}
we use Newton's iterations. Since $\bar F$ is a constant vector, the iterations are
\begin{equation}\label{Newtons}
(a^{[n+1]},b^{[n+1]},c^{[n+1]})^t=(a^{[n]},b^{[n]},c^{[n]})^t-J_Q(a^{[n]},b^{[n]},c^{[n]})^{-1}(Q(a^{[n]},b^{[n]},c^{[n]})-\bar{F}),
\end{equation}
$n\ge 0$, where $J_Q$ is the Jacobian of $Q$.
\begin{lemma}\label{Lemma4.1}
Under condition \ref{extension}(3), for a small enough $h$,  $\|J_Q^{-1}\|_\infty\ge C_J>0$.
\end{lemma}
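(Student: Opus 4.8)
The plan is to make the dependence of $Q$ on the curve parameters fully explicit, read off the Jacobian entries, and then deduce the lower bound from the invertibility of $J_Q$ together with a crude a~priori upper bound on $J_Q$ itself. First I would differentiate (\ref{Qidef}) using $\partial q/\partial a=x^2$, $\partial q/\partial b=x$, $\partial q/\partial c=h$ together with $L_1-L_2=\alpha x+\beta y+\gamma$. The second integral in (\ref{Qidef}) does not involve $(a,b,c)$, and a direct computation collapses the three partials of the first integral into the common form
\begin{equation*}
\frac{\partial Q_i}{\partial a}=h^{-2}\!\int_{I_i}\!J(x)x^2\,dx,\quad
\frac{\partial Q_i}{\partial b}=h^{-2}\!\int_{I_i}\!J(x)x\,dx,\quad
\frac{\partial Q_i}{\partial c}=h^{-1}\!\int_{I_i}\!J(x)\,dx,
\end{equation*}
where $I_i=[(i-1)h,ih]$ and $J(x):=\alpha x+\beta q(x)+\gamma=(L_1-L_2)(x,q(x))$ is precisely the jump of the model across the curve. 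This is the step where Assumption \ref{extension}(3) enters: the jump obeys $|J(x)|\ge\delta>0$ with a fixed sign, so that $J(x)=\gamma+O(h)$ with $|\gamma|\ge\delta/2$ on the relevant cells, since $\alpha x,\beta q(x)=O(h)$ there.

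Next I would expose the three different $h$-scalings of the columns. On each cell $x=\Theta(h)$, so with $x=hu$ the moments satisfy $\int_{I_i}x^k\,dx=h^{k+1}\mu_i^{(k)}$ with $\mu_i^{(k)}=\int_{i-1}^{i}u^k\,du$. Factoring $J(x)\approx\gamma$ out of every entry and pulling the powers of $h$ out of the three columns gives
\begin{equation*}
\det J_Q=\gamma^3 h\,\bigl(\tilde D+O(h)\bigr),\qquad
\tilde D=\det\bigl[\mu_i^{(k)}\bigr]_{i=2,3,4;\,k=2,1,0}.
\end{equation*}
Since $\mu_i^{(k)}=\tfrac{1}{k+1}\bigl(i^{k+1}-(i-1)^{k+1}\bigr)$ is a degree-$k$ polynomial in $i$ with leading coefficient $1$, column operations reduce $\tilde D$ to a (reversed) Vandermonde determinant in the consecutive nodes $i=2,3,4$, hence a nonzero absolute constant (a one-line evaluation gives $\tilde D=-2$). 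Combined with $|\gamma|\ge\delta/2$ this yields $|\det J_Q|\ge c\,\delta^3 h>0$ for all small enough $h$, so $J_Q$ is invertible and $J_Q^{-1}$ exists.

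Finally I would close the argument with a trivial upper bound on $J_Q$ and submultiplicativity. Each entry of $J_Q$ is a cell-average of $J(x)x^k$ against $h^{-2}$ (for $a,b$) or $h^{-1}$ (for $c$); using $|J|\le\max(|f_1|,|f_2|)+O(h)$, finite by Assumption \ref{extension}(1), and $x=\Theta(h)$, every row sum is bounded by a constant $C$ independent of $h$, i.e. $\|J_Q\|_\infty\le C$. Then
\begin{equation*}
1=\|I\|_\infty=\|J_QJ_Q^{-1}\|_\infty\le\|J_Q\|_\infty\,\|J_Q^{-1}\|_\infty\le C\,\|J_Q^{-1}\|_\infty,
\end{equation*}
so that $\|J_Q^{-1}\|_\infty\ge 1/C=:C_J>0$, as claimed.

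I expect the only real obstacle to be the invertibility step, namely verifying that the three $h$-scalings of the columns do not conspire to make $\det J_Q$ degenerate faster than $\Theta(h)$. The care needed is in tracking that the $c$-column carries $h^{-1}$ while the $a,b$-columns carry $h^{-2}$, so that after extracting the moment powers the surviving constant is exactly the nonvanishing Vandermonde-type determinant $\tilde D$; the role of Assumption \ref{extension}(3) is solely to keep the jump factor $\gamma^3$ bounded away from $0$, which is what guarantees that $J_Q$ is nonsingular, and hence its inverse bounded below, uniformly in $h$.
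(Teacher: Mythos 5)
Your proof is correct for the inequality as literally stated, and its skeleton matches the paper's: reduce the claim to a bound on $\det J_Q$, evaluate the determinant exactly in the constant-jump case, and treat the general linear case as an $O(h)$ perturbation, with Assumption \ref{extension}(3) keeping the jump factor $\gamma^3$ away from zero. Your closing step --- invertibility plus $\|J_Q\|_\infty\le C$ plus submultiplicativity --- is in fact \emph{needed} for the stated lower bound $\|J_Q^{-1}\|_\infty\ge C_J$ and is absent from the paper, whose proof establishes only the determinant bound (a lower bound on $|\det J_Q|$ alone does not bound $\|J_Q^{-1}\|_\infty$ from below).

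Your bookkeeping, however, surfaces a genuine discrepancy with the paper's proof, and it is worth spelling out. With the paper's declared unknowns $(a,b,c)$, where $q(x)=ax^2+bx+ch$ as in (\ref{ch}) and in Newton's iteration (\ref{Newtons}), your value $\det J_Q=2(c_2-c_1)^3\,h$ is the correct one; the paper's $h$-free value $2(c_2-c_1)^3$ is what one obtains if the quadratic coefficient is also rescaled, i.e., if the Jacobian is taken with respect to $(ah,b,c)$ (equivalently, if the problem is written in unit coordinates $x=hu$, $y=hv$, where all three columns of the Jacobian are $\Theta(1)$). The two answers differ by exactly the chain-rule factor $h$ you track. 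This matters because of how the lemma is used: in the proof of Proposition \ref{PropOh2}, $C_J$ serves as an \emph{upper} bound, $\|J_Q^{-1}\|_\infty\le C_J$ independent of $h$ (the inequality in the statement of Lemma \ref{Lemma4.1} is evidently reversed relative to its use). Your scaling analysis shows that this intended uniform upper bound fails in the variables $(a,b,c)$: the cofactors along the $a$-column are $\Theta(1)$ while $\det J_Q=\Theta(h)$, so $\|J_Q^{-1}\|_\infty=\Theta(h^{-1})$; the bound holds only after passing to $(ah,b,c)$, where $\det J_Q=-2\gamma^3+O(h)$. In short, you have proved the literal (and weak) statement correctly; read against the paper, your computation shows the paper's proof both omits the step your final paragraph supplies and asserts a determinant formula that is off by a factor of $h$ in the paper's own coordinates --- a point that has to be repaired (by the rescaling above) before the lemma can support Proposition \ref{PropOh2} as written.
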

\begin{proof}
It is enouth to show that $|det(J_Q)|\ge\beta>0$ for $\beta>0$, where $\beta$ is independent upon the approximation location.
For the case of constant functions $L_1=c_1$, $L_2=c_2$, it follows, by direct calculations, that
$$det(J_Q)=2(c_2-c_1)^3.$$
For the case of linear functions $L_1$, $L_2$, in view of condition \ref{extension}(3), it follows by a perturbation argument that
$$|det(J_Q)|\ge 2\delta^3+O(h),$$
as $h\to 0$.
\end{proof}

\begin{proposition}\label{PropOh2}
Let $\tilde v^{[0]}=(\tilde a,\tilde b,\tilde c)^t$ and let $v^{[k]}=(a^{[k]},b^{[k]},c^{[k]})^t$ be the vectors defined by Newton's iteration (\ref{Newtons}). Assuming the Hessian of $Q$ satisfies $\|H_Q\|_\infty\le C_H$, and $\|\bar L-\bar F\|_\infty\le Ch^2$, then, for a small enough $h$, Newton's iterations converge to a solution $v^{[\infty]}$ of (\ref{QminusF}),
and
\begin{equation}\label{Qvkd2}
v^{[\infty]}=\tilde{v}^{[0]}+O(h^2),
\end{equation}
as $h$ tends to $0$.
\end{proposition}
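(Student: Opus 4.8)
The plan is to prove this as a quantitative convergence statement for Newton's method, combining the standard Newton--Kantorovich-type analysis with the two a priori bounds supplied by Lemma \ref{Lemma4.1} and the hypotheses. The starting point $\tilde v^{[0]}=(\tilde a,\tilde b,\tilde c)^t$ is, by construction, the \emph{exact} solution of the perturbed system (\ref{quadsystemL}), i.e. $Q(\tilde v^{[0]})-\bar L=0$. Therefore the residual of $\tilde v^{[0]}$ for the system we actually want to solve, (\ref{QminusF}), is
\begin{equation}\label{residualbound}
\|Q(\tilde v^{[0]})-\bar F\|_\infty=\|\bar L-\bar F\|_\infty\le Ch^2.
\end{equation}
So the initial guess is already an $O(h^2)$-accurate solver of the target equation in the residual sense, and the whole task reduces to showing that Newton's iteration, started from such a point, converges to a genuine root $v^{[\infty]}$ that lies within $O(h^2)$ of the start.

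First I would record the ingredients of the standard Newton convergence theorem: we have (i) a lower bound on the invertibility of the Jacobian, $\|J_Q^{-1}\|_\infty\le \beta_0$ uniformly for small $h$ (this is exactly Lemma \ref{Lemma4.1}, using $|\det J_Q|\ge 2\delta^3+O(h)>0$ together with the fact that the entries of $J_Q$ are bounded, so the inverse is bounded in norm); (ii) a Lipschitz/Hessian bound $\|H_Q\|_\infty\le C_H$, which gives $\|J_Q(u)-J_Q(w)\|_\infty\le C_H\|u-w\|_\infty$ on the relevant ball; and (iii) the small initial residual (\ref{residualbound}). The Newton--Kantorovich criterion requires that the product $\eta:=\beta_0\,C_H\,\|Q(\tilde v^{[0]})-\bar F\|_\infty$ be less than $1/2$ (or whatever constant the chosen form of the theorem uses). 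By (\ref{residualbound}) we have $\eta\le \beta_0 C_H C h^2$, which is $<1/2$ for $h$ small enough, so the hypotheses of the theorem are met and Newton's iterates are well defined and converge to a root $v^{[\infty]}$ of (\ref{QminusF}).

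Second I would extract the accuracy estimate (\ref{Qvkd2}). The first Newton step moves the iterate by $\|v^{[1]}-\tilde v^{[0]}\|_\infty=\|J_Q(\tilde v^{[0]})^{-1}(Q(\tilde v^{[0]})-\bar F)\|_\infty\le \beta_0\,Ch^2=O(h^2)$, directly from (\ref{residualbound}) and the Jacobian bound. The Kantorovich theorem then bounds the total distance to the limit by a geometric series whose first term is this quantity, so $\|v^{[\infty]}-\tilde v^{[0]}\|_\infty\le 2\|v^{[1]}-\tilde v^{[0]}\|_\infty=O(h^2)$, which is precisely (\ref{Qvkd2}). Alternatively, and perhaps more transparently for this paper, one can argue by the inverse/implicit function theorem: define the map $G(v)=v-J_Q(\tilde v^{[0]})^{-1}(Q(v)-\bar F)$; the bounds (i)--(ii) make $G$ a contraction on a ball of radius $O(h^2)$ about $\tilde v^{[0]}$ (since $G(\tilde v^{[0]})-\tilde v^{[0]}=O(h^2)$ and $\|G'\|=\|I-J_Q(\tilde v^{[0]})^{-1}J_Q(\cdot)\|\le \beta_0 C_H\cdot(\text{radius})$ is small), so its unique fixed point $v^{[\infty]}$ is the sought root and automatically satisfies $v^{[\infty]}-\tilde v^{[0]}=O(h^2)$.

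The main obstacle I anticipate is not the abstract Newton machinery, which is textbook, but verifying that its hypotheses hold \emph{with constants independent of the location of the cell-patch $R$ along $\Gamma$}. Lemma \ref{Lemma4.1} already addresses the uniform invertibility of $J_Q$ via the jump lower bound $\delta$ in Assumption \ref{extension}(3), and Proposition \ref{PropOh2} takes $C_H$ and the $O(h^2)$ residual bound as hypotheses; so the care needed is mainly in checking that these constants can indeed be chosen uniformly. In particular one must confirm that the quadratic maps $Q_i(a,b,c)$ have Hessians bounded uniformly over the range of admissible parameters $a,b,c$ (which are $O(1)$ by the normalization (\ref{ch})) and over all translated/rotated local frames, so that the single constant $C_H$ and the single contraction radius genuinely work at every point of the curve. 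Once that uniformity is in hand, the convergence and the $O(h^2)$ estimate follow as above.
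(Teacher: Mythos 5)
Your proposal is correct, and it rests on exactly the same three ingredients as the paper's proof: the $O(h^2)$ initial residual $\|Q(\tilde v^{[0]})-\bar F\|_\infty=\|\bar L-\bar F\|_\infty\le Ch^2$, the uniform bound on $J_Q^{-1}$ from Lemma \ref{Lemma4.1}, and the Hessian bound $C_H$. The difference is one of packaging: you invoke the Newton--Kantorovich theorem (or, alternatively, a contraction argument for the frozen-Jacobian map $G$) as a black box, whereas the paper re-derives the convergence from scratch by an induction on the residual, proving $\|Q(v^{[k]})-\bar F\|_\infty \le C^{2^k}C_J^{2^{k+1}-2}C_H^{2^k-1}h^{2^{k+1}}$ and exploiting a feature you did not use: since $Q$ is exactly quadratic, the second-order Taylor expansion of $Q(v^{[n+1]})$ about $v^{[n]}$ is \emph{exact}, so the quadratic error recursion holds with no remainder terms and the doubly-exponential decay of the steps gives both the Cauchy property and the $O(h^2)$ total displacement by direct summation. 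Your route is shorter and buys generality (it would work if $Q$ were merely $C^2$); the paper's buys self-containedness and explicit constants. Two small points to tighten: first, Lemma \ref{Lemma4.1} as printed asserts $\|J_Q^{-1}\|_\infty\ge C_J>0$, which is vacuous as stated --- you correctly read it as the intended upper bound on $\|J_Q^{-1}\|_\infty$ (equivalently a lower bound on $|\det J_Q|$, which is what its proof establishes, uniformly in the patch location); second, your frozen-Jacobian contraction alternative establishes existence of the root within $O(h^2)$ and convergence of the \emph{simplified} Newton iteration, but the proposition concerns the true iteration (\ref{Newtons}), so that alternative should be kept as a supplement to, not a replacement for, the Kantorovich argument. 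Your concern about uniformity of constants along $\Gamma$ is legitimate and is precisely what the paper delegates to Lemma \ref{Lemma4.1} (for $C_J$) and to the hypotheses of the proposition (for $C_H$ and $C$), so nothing further is required there.
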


\begin{proof}
First we prove that for a small enough $h$
\begin{equation}\label{Qvkd}
\|Q(v^{[k]})-\bar F\|_\infty \le C^{2^k}C_J^{2^{k+1}-2}C_H^{2^k-1}h^{2^{k+1}}, k\ge 1.
\end{equation}
The proof is by induction.
Starting with $\tilde v^{[0]}$, as $Q(\tilde v^{[0]})=\bar L$, by hypothesis
\begin{equation}\label{eq1}
\|Q(\tilde v^{[0]})-\bar F\|_\infty=\|\bar L-\bar F\|_\infty\le Ch^2,
\end{equation}
and by \eqref{Newtons}:
$$v^{[1]}=\tilde v^{[0]}-J_Q(\tilde{v}^{[0]})^{-1}(Q(\tilde v^{[0]})-\bar F).$$
Using Lemma \ref{Lemma4.1}, it follows that  $J_Q(\tilde a, \tilde b,\tilde c)^{-1}$ exists and is smooth and bounded in a neighborhood of $(\tilde a, \tilde b,\tilde c)$, with a bound which is independent upon $h$. Then by \eqref{eq1} it follows that
$$\|v^{[1]}-\tilde v^{[0]}\|_\infty\le CC_Jh^2$$
for a small enough $h$.

Assuming \eqref{Qvkd} holds for $k=n$, from Newton's iteration formula (\ref{Newtons}) it follows that
\begin{equation*}
\begin{split}
\|v^{[n+1]}-v^{[n]}\|_\infty&\le \|J_Q(v^{[n]})^{-1}(Q(v^{[n]})-\bar F)\|_\infty\\
&\le C^{2^n}C_J^{2^{n+1}-1}C_H^{2^n-1}h^{2^{n+1}},
\end{split}
\end{equation*}
for $h<0.5 (CC_J^2C_H)^{-1}$ we obtain
\begin{equation}\label{fast1}
\|v^{[n+1]}-v^{[n]}\|_\infty\le C_J^{-1}C_H^{-1}2^{-2^n}h^2.
\end{equation}
The fast decay implies
\begin{equation}\label{fastsum}
\|v^{[n+1]}-\tilde v^{[0]}\|_\infty\le \tilde C \sum_{k=1}^n2^{-2^k} h^2,
\end{equation}
i.e. $v^{[n+1]}$ is in a neighborhood of $\tilde v^{[0]}$. If we expand $Q(v^{[n+1]})$ in Taylor series about $v^{[n]}$:

\begin{equation}\label{Taylors2}
\begin{split}
Q(v^{[n+1]})=&Q\big(v^{[n]}-J_Q(v^{[n]})^{-1}(Q(v^{[n]})-\bar F)\big) \\
=&Q(v^{[n]})-J_Q(v^{[n]})J_Q(v^{[n]})^{-1}(Q(v^{[n]})-\bar F) \\
&+(J_Q(v^{[n]})^{-1}(Q(v^{[n]})-\bar F))^tH_Q(J_Q(v^{[n]})^{-1}(Q(v^{[n]})-\bar F)) \\
=&\bar F +(J_Q(v^{[n]})^{-1}(Q(v^{[n]})-\bar F))^tH_Q(J_Q(v^{[n]})^{-1}(Q(v^{[n]})-\bar F)) .
\end{split}
\end{equation}

Using the induction hypothesis, it follows that
\begin{equation}\label{Qvnp1}
\begin{split}
\|Q(v^{[n+1]})-\bar F\|_\infty &\le C_J^2C_H (C^{2^n}C_J^{2^{n+1}-2}C_H^{2^n-1}h^{2^{n+1}})^2 \\
&=C^{2^{n+1}}C_J^{2^{n+2}-2}C_H^{2^{n+1}-1}h^{2^{n+2}}.
\end{split}
\end{equation}
Finally, the fast decay observed in \eqref{fast1} also implies that the sequence $\{v^{[n]}\}$ is a Cauchy sequence, and therefore converging. By \eqref{Qvnp1}, it also follows that $v^{[\infty]}=\lim_{n\to\infty}v^{[n]}$ solves the quadratic system, and the result (\ref{Qvkd2}) follows from \eqref{fastsum}.
\end{proof}

From Proposition \ref{PropOh2}, it follows that the solution of the quadratic system for the unknown coefficients $a$, $b$ and $c$ of $q$ is an $O(h^2)$ perturbation of the corresponding coefficients of $\tilde{q}$. Therefore, in view of (\ref{ch}), in the interval $[2h,3h]$,
\begin{equation}\label{Oh3}
q(x)=\tilde  q(x) +O(h^3),
\end{equation}
and by (\ref{yq})
\begin{equation}\label{q2gOh3}
q(x)=g(x) +O(h^3).
\end{equation}

\subsection{Enhanced global curve approximation}\label{Egca}\hfill

\medskip
Using the above procedure for local $O(h^3)$ approximations to $\Gamma$, we can now improve upon the global $O(h)$ approximation presented above in Section \ref{AppGamma}, as follows:

\begin{enumerate}
\item Denote the union of all the locally computed quadratic approximation curves to the different sections of $\Gamma$ by $G$.
\item Overlay on $[0,1]^2$ a mesh of $n\times n$ points $Q$.
\item Divide $Q$ into two sets of points, $Q_1$ and $Q_2$, separated by the points in $G$.
\item Attach to each point $p\in Q_1$ the value $dist(p,G)$ and to each point $p\in Q_2$ the value $-dist(p,G)$.
\item Find a bicubic spline $S_3$ on $[0,1]^2$ approximating the above data on $Q_1$ and $Q_2$ using a local quasi-interpolation operator.
\item The zero level curve of this spline, $\tilde \Gamma$, is the suggested enhanced approximation to $\Gamma$.

As we show below, $\tilde \Gamma$ is essentially an $O(h^3)$ approximation of $\Gamma$.

\end{enumerate}

\subsubsection{Curve approximation analysis}\label{Curveappanal}\hfill

A similar approximation quality $\tilde\Gamma\sim\Gamma$ is obtained if we define the spline $S_3$ by quasi-interpolation to the signed-distance values defined on $Q$.
In the following proposition, we analyze the quality of the approximation $\tilde\Gamma\sim\Gamma$, obtained by quasi-interpolation, for a class of smooth singularity curves. Below we cite the basic approximation order result from \cite{ALR}.
\begin{proposition}\label{gammaprop}
Assume $\Gamma$ is a $C^4$ smooth curve in $[0,1]^2$ with minimal curvature radius $>R$, and such that its $R$-neighborhood is not self-intersecting and not intersecting the boundaries of $[0,1]^2$ . Let $Q$ be a mesh of $n\times n$ points in  $[0,1]^2$,  of mesh size $d<R/10$. Let $\Gamma$ subdivide $[0,1]^2$ into two domains $\Omega_1$ and $\Omega_2$, and separate the mesh points $Q$ into $Q_1$ and $Q_2$ respectively. Let us attach signed-distance values to the mesh points $Q$ as follows:
$$D(p)=dist(p,\Gamma), \ \ \ p\in Q_1,$$
$$D(p)=-dist(p,\Gamma), \ \ \ p\in Q_2.$$
Let $A$ be the quasi-interpolation operator from
sets of values on $Q$ to the space of bi-cubic splines on $[0,1]^2$ with uniform mesh of mesh size $d$, and assume $\|A\|_\infty<\beta$.
Let $S_\epsilon$ be the bi-cubic spline defined by applying $A$ to the perturbed signed-distance data
$$\tilde{D}(p)=D(p)+\epsilon_p,\ \ \ |\epsilon_p|\le \epsilon ,$$
and let $\Gamma_\epsilon$ be the zero-level curve of $S_\epsilon$. Then
\begin{equation}\label{disteps}
dist(\Gamma,\Gamma_\epsilon)\le  C_0d^4+\beta\epsilon.
\end{equation}
\end{proposition}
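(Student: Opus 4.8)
The plan is to view the exact signed-distance function $D(x)$, extended to a fixed tube around $\Gamma$, as the ideal target whose zero level set is precisely $\Gamma$, and to exploit the crucial geometric fact that $\|\nabla D\|\equiv 1$ on a neighborhood of $\Gamma$. Once we control $\|S_\epsilon-D\|_\infty$ in such a tube, the unit-gradient property converts the sup-norm bound \emph{directly} into a two-sided (Hausdorff) distance bound between $\Gamma$ and $\Gamma_\epsilon$, with no loss of constant. Thus the proof splits into two tasks: (i) an approximation estimate $\|S_\epsilon-D\|_\infty\le C_0d^4+\beta\epsilon$ valid near $\Gamma$, and (ii) a level-set perturbation argument turning (i) into (\ref{disteps}).

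For task (i), the first step is to record that, under the curvature-radius hypothesis ($>R$) together with the assumption that the $R$-neighborhood $T_R=\{x:\,dist(x,\Gamma)<R\}$ is neither self-intersecting nor touching $\partial[0,1]^2$, the signed-distance function $D$ is $C^4$ on $T_R$ with $\|\nabla D\|\equiv 1$ there; this standard tubular-neighborhood fact is exactly where the geometric assumptions are consumed. Next I would use that the quasi-interpolation operator $A$ is \emph{local}: the spline value $S_0:=A(D|_Q)$ at a point depends only on data $D(p)$ at mesh points within $O(d)$ of that point. Since $d<R/10$, for points in $T_{R/2}$ all the data $A$ sees lie in $T_R$, where $D$ is $C^4$; hence the standard bicubic quasi-interpolation order gives $\|S_0-D\|_{\infty,T_{R/2}}\le C_0d^4$. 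Finally, linearity of $A$ and the sampled perturbation $\tilde D=D+\epsilon_p$, $|\epsilon_p|\le\epsilon$, give $S_\epsilon=S_0+A(\epsilon|_Q)$, whence $\|S_\epsilon-S_0\|_\infty\le\beta\epsilon$ from $\|A\|_\infty<\beta$. Combining, $\|S_\epsilon-D\|_{\infty,T_{R/2}}\le C_0d^4+\beta\epsilon=:\eta$.

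For task (ii) I would prove both inclusions using only $\eta$ and the geometry of $D$. For the forward inclusion, a crude global estimate (first order in $d$, using that $D$ is globally Lipschitz while $|D|\ge R/2$ outside $T_{R/2}$) confines $\Gamma_\epsilon$ to $T_{R/2}$, so any $q\in\Gamma_\epsilon$ satisfies $dist(q,\Gamma)=|D(q)|=|D(q)-S_\epsilon(q)|\le\eta$. For the reverse inclusion, fix $p\in\Gamma$ with unit normal $n$; along the normal ray one has the exact identity $D(p+tn)=t$ for $|t|<R$. Hence $S_\epsilon(p+(\eta+\delta)n)\ge(\eta+\delta)-\eta=\delta>0$ and $S_\epsilon(p-(\eta+\delta)n)\le-\delta<0$ for any $\delta>0$, so by the intermediate value theorem $S_\epsilon$ vanishes on that segment, i.e.\ $dist(p,\Gamma_\epsilon)\le\eta+\delta$; letting $\delta\to0$ gives $dist(p,\Gamma_\epsilon)\le\eta$. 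The two inclusions together yield (\ref{disteps}).

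The main obstacle is task (i), specifically the interplay between the local support of $A$ and the region where $D$ is genuinely smooth: the signed-distance function is only $C^4$ inside $T_R$ and generically loses regularity at the medial axis, so the $O(d^4)$ order can be claimed only where every mesh point feeding the quasi-interpolant lies in the smooth sub-tube. Making this localization precise---quantifying the support radius of $A$, choosing $T_{R/2}$, and invoking the polynomial-reproduction/Taylor estimate for bicubic quasi-interpolation on $C^4$ data---is the technical heart; the regularity of $D$ on $T_R$ and the level-set conversion in task (ii) are then comparatively routine.
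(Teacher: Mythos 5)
There is no in-paper proof to compare yours against: the paper does not prove Proposition \ref{gammaprop}, it cites it as a known result (the sentence immediately preceding it reads ``Below we cite the basic approximation order result from \cite{ALR}''), and only Corollary \ref{gammacor}, which applies the proposition with $\epsilon=O(h^3)$ and $d=O(h^{3/4})$, is proved in the paper. So your proposal must be judged on its own merits, and on those merits it is correct and takes the natural route: (i) the tubular-neighborhood facts ($D\in C^4$ and $\|\nabla D\|\equiv 1$ on $T_R=\{x:\,dist(x,\Gamma)<R\}$, which is where the curvature-radius and non-self-intersection hypotheses are consumed), combined with locality and linearity of $A$, give $\|S_\epsilon-D\|_{\infty,T_{R/2}}\le C_0d^4+\beta\epsilon=:\eta$; (ii) the exact identity $D(p+tn)=t$ along normals plus the intermediate value theorem converts the sup-norm bound into a two-sided level-set bound with the same constant, which is precisely (\ref{disteps}). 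Your localization of the $O(d^4)$ estimate to $T_{R/2}$, so that the quasi-interpolant only ever sees data from the region where $D$ is genuinely $C^4$ (avoiding the medial axis), is exactly the point that a careless proof would miss, and you identify it correctly as the technical heart.

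Two steps should be made explicit if this is written up. First, your ``crude global estimate'' confining $\Gamma_\epsilon$ to $T_{R/2}$ does not follow from $\|A\|_\infty<\beta$ alone: you need $A$ to reproduce constants (a standard property of quasi-interpolants, and subsumed by the cubic reproduction you already invoke for the $O(d^4)$ bound). With constant reproduction and locality, $S_0(x)-D(x)=A\bigl(D-D(x)\bigr)(x)$ and the global $1$-Lipschitz property of $D$ give $|S_0(x)-D(x)|\le C\beta d$ everywhere, hence $|S_\epsilon|\ge R/2-C\beta d-\beta\epsilon>0$ off $T_{R/2}$. Second, both this confinement and the normal-ray step require smallness, essentially $C\beta d+\beta\epsilon<R/2$ and $\eta<R/2$; for large $\epsilon$ the zero level set need not even lie near $\Gamma$ (or exist), so the inequality must be read asymptotically. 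These are caveats about hypotheses the proposition itself leaves implicit (and which are harmless in the paper's use of it, where $d,\epsilon\to 0$ as $h\to 0$), not flaws in your reasoning.
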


\begin{corollary}\label{gammacor}
Let the singularity curve $\Gamma$ satisfy the assumptions in Proposition \ref{gammaprop}.
Let the approximate signed-distance values be defined as in Section \ref{Egca} with $n=O(h^{-\frac{3}{4}})$, with $S_3$ defined
using a cubic quasi-interpolation approximation operator, and the resulting approximated curve denoted by $\tilde\Gamma$. Then, as $h\to 0$,
\begin{equation}\label{diste}
dist(\Gamma,\tilde\Gamma)=O(h^3).
\end{equation}
\end{corollary}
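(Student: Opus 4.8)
The plan is to combine the two error sources quantified earlier in the excerpt with the general perturbation estimate of Proposition~\ref{gammaprop}, and then optimize the mesh size $d$ against the perturbation magnitude $\epsilon$. The approximate signed-distance data used in Section~\ref{Egca} differs from the exact signed-distance data $D(p)$ of Proposition~\ref{gammaprop} in that distances are measured to the polygonal/quadratic aggregate $G$ rather than to $\Gamma$ itself. By (\ref{q2gOh3}), each local quadratic piece satisfies $q(x)=g(x)+O(h^3)$, so $\mathrm{dist}(\Gamma,G)=O(h^3)$. Consequently, for every mesh point $p\in Q$ we have $|\mathrm{dist}(p,G)-\mathrm{dist}(p,\Gamma)|\le \mathrm{dist}(\Gamma,G)=O(h^3)$, which is exactly the perturbation bound $\epsilon=O(h^3)$ in the notation of Proposition~\ref{gammaprop}.

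With this identification, I would apply Proposition~\ref{gammaprop} directly. The mesh has $n\times n$ points on $[0,1]^2$, so the mesh size is $d=O(n^{-1})$. Substituting the choice $n=O(h^{-3/4})$ gives $d=O(h^{3/4})$, whence the spline approximation term is
\begin{equation}\label{d4term}
C_0 d^4 = C_0\, O\!\left(h^{3/4}\right)^4 = O(h^3).
\end{equation}
The perturbation term is $\beta\epsilon = \beta\, O(h^3)=O(h^3)$, using the uniform bound $\|A\|_\infty<\beta$ on the quasi-interpolation operator. Adding the two contributions via (\ref{disteps}) yields
\begin{equation}\label{finalbound}
\mathrm{dist}(\Gamma,\tilde\Gamma)\le C_0 d^4+\beta\epsilon = O(h^3),
\end{equation}
which is the claimed estimate (\ref{diste}).

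Before invoking Proposition~\ref{gammaprop} I would verify its structural hypotheses carry over. The curve $\Gamma$ is $C^4$ with bounded curvature by Assumption~\ref{extension}(2), so it has a positive minimal curvature radius $R$ and, being nowhere tangent to the boundary, its $R$-neighborhood can be taken non-self-intersecting and clear of $\partial[0,1]^2$ after shrinking $R$ if necessary. The requirement $d<R/10$ is satisfied for small $h$ since $d=O(h^{3/4})\to 0$ while $R$ is fixed. I would also note that the signed-distance values attached in Section~\ref{Egca} use the same sign convention (positive on $Q_1$, negative on $Q_2$) as in the proposition, so the two sets of data are genuinely an $\epsilon$-perturbation of one another rather than differing in structure.

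The main obstacle, and the point deserving the most care, is the balancing that dictates the specific exponent $n=O(h^{-3/4})$: the choice is forced by equating the two asymptotic regimes, demanding $d^4=O(h^3)$ so that $d=O(h^{3/4})$ and hence $n=O(d^{-1})=O(h^{-3/4})$. One must check that neither term dominates at a worse rate and that pushing $n$ higher (finer mesh) would not improve the bound, since the $\beta\epsilon=O(h^3)$ floor coming from the $G\sim\Gamma$ discrepancy is independent of $d$ and cannot be beaten by refinement. Thus $O(h^3)$ is genuinely the saturated rate, and the word \emph{essentially} in the statement reflects that the global spline reconstruction cannot recover a better order than the local quadratic model it interpolates.
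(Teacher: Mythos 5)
Your proof is correct and takes essentially the same route as the paper's: identify the perturbation of the signed-distance data as $\epsilon=O(h^3)$ via (\ref{q2gOh3}), note that $n=O(h^{-3/4})$ gives mesh size $d=O(h^{3/4})$ so that $C_0d^4=O(h^3)$, and conclude from the bound (\ref{disteps}) of Proposition \ref{gammaprop}. The additional checks you carry out (the structural hypotheses of the proposition and the optimality of the exponent $3/4$) are sound elaborations of points the paper leaves implicit.
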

\begin{proof}
By the construction in Section \ref{Egca}, and using (\ref{q2gOh3}), it follows that the perturbation parameter in Proposition \ref{gammaprop} satisfies
$$\epsilon =O(h^3).$$
Choosing $n=O(h^{-\frac{3}{4}})$ it follows that $d=O(h^{\frac{3}{4}})$,
and the result follows by (\ref{disteps}).

\end{proof}

\section{Reconstructing $f$}

Now that we have a third-order approximation to the singularity curve we are ready to construct the approximation to $f$. We have tested two approximation strategies.

\subsection{Approximation by fitting cell-averages}\hfill

\medskip
One obvious strategy is by fitting the cell-averages of the approximant to the given cell-average data, at all cells. The first step is to use the local high third-order approximation for generating a high order global approximation of $\Gamma$. We suggest doing it as described in Section \ref{AppGamma} above, i.e., finding a bi-cubic spline $S_3(x,y)$ approximating the signed-distance from $\Gamma$. Here the signed-distance data is computed using the piecewise quadratic approximations to $\Gamma$ discussed in the previous section. Another important rule of the resulting (implicit) approximation of the singularity curve is that it provides a simple characterization of the corresponding approximation of the two subdomains $\Omega_1$ and $\Omega_2$:
\begin{equation}\label{Omegaappr}
\begin{split}
\tilde\Omega_1=\{(x,y)| S_3(x,y)>0\}\cap [0,1]^2, \\
\qquad \tilde\Omega_2=\{(x,y)| S_3(x,y)<0\}\cap [0,1]^2.
\end{split}
\end{equation}

We consider approximating $f_1$ on $\tilde{\Omega}_1$ and
we consider approximating $f_2$ on $\tilde{\Omega}_2$ by a linear combination of basis functions,
$$\{p_i\}_{i=1}^{M_1+M_2}=\{p_i^{[1]}\}_{i=1}^{M_1}\cup \{p_i^{[2]}\}_{i=1}^{M_2},$$
\begin{equation}\label{f12eqsum}
\begin{split}
f_1(x,y)\sim \sum_{i=1}^{M_1} a^{[1]}_ip^{[1]}_i(x,y)\equiv P^{[1]}(x,y), \\
\qquad
f_2(x,y)\sim \sum_{i=1}^{M_2} a^{[2]}_ip^{[2]}_i(x,y)\equiv P^{[2]}(x,y).
\end{split}
\end{equation}

For example, we may use two sets of bi-cubic spline basis functions, one with B-splines restricted to $\tilde\Omega_1$ and the other with B-splines restricted to $\tilde\Omega_2$.

It is convenient and worthwhile to consider the approximant in the form
$$\tilde f(x,y)\equiv P^{[1]}(x,y){\bf 1}_{\tilde\Omega_1}(x,y)+P^{[2]}(x,y){\bf 1}_{\tilde\Omega_2}(x,y).$$

Let us denote by ${\bf{\bar p}}_i^{[j]}$ the vector of cell-averages of $p_i^{[j]}$, $j=1,2$. A good approximation to $f$ implies a good approximation to the vector of cell-averages $\bf{\bar f}$ of $f$:
\begin{equation}\label{barP}
{\bf{\bar f}}\sim \sum_{i=1}^{M_1} a_i^{[1]}{\bf{\bar p}}_i^{[1]}+\sum_{i=1}^{M_2} a_i^{[2]}{\bf{\bar p}}_i^{[2]}\equiv \bf{\bar P},
\end{equation}
and vice versa, a good fitting of the cell averages induces a good approximation of the function. Therefore, we define the approximation to the function by solving the system of equations

\begin{equation}\label{systembarf}
\sum_{i=1}^{M_1} a_i^{[1]}{\bf{\bar p}}_i^{[1]}+\sum_{i=1}^{M_2} a_i^{[2]}{\bf{\bar p}}_i^{[2]}=\bf{\bar f},
\end{equation}
for the coefficients $\{a_i^{[1]}\}_{i=1}^{M_1}$, $\{a_i^{[2]}\}_{i=1}^{M_2}$.

We are given $N^2$ cell-average data values, while the number of unknowns is $M_1+M_2$ may be different, usually smaller, than $N^2$. We suggest solving the system in the least-squares sense, and the challenge is to analyze the quality of the resulting approximation. We notice that an $O(h^3)$ deviation in the position of $\Gamma$ implies an $O(h^2)$ deviation in the computation of the cell-averages in cells containing the singularity curve. Hence, we cannot expect more than an $O(h^2)$ approximation to $f$ using the above strategy.

\subsection{Approximation of $f$ using quasi-interpolation}\hfill

\medskip
Two ways for directly approximating a function from cell-average values are presented in \cite{ALRY}. The first one is based on Taylor expansions and its advantage approach is the simplicity of both its application and its analysis. The second method is founded on quasi-interpolation theory characterized by its flexibility. In this subsection, we briefly review this technique adapting it from the general form to two dimensions (see more details in \cite{ALRY}).

Let start denoting by $p\in\mathbb{N}$  and for all $(x,y)\in \mathbb{R}^2$:
$$B_{p}\left(x,y\right)=B_{p}(x)B_{p}(y)$$
where $B_{p}$ is the $p$-degree B-spline supported on $I_{p} =\left[-\frac{p+1}{2},\frac{p+1}{2}\right]$, with equidistant knots $
S_{p} = \left\{-\frac{p+1}{2},\hdots,\frac{p+1}{2}\right\}$. Given the values $\bar f_{i,j}$ in (\ref{2DAC}) and $\mathbf{n}\in \mathbb{N}^2,$ we denote as
$$\bar{f}_{\mathbf{n},p}=\left\{\bar{f}_{n_{1}+j_1,n_{2}+j_2}:0\leq |j_l|\leq \left\lfloor \frac{p}{2}\right\rfloor, l=1,2\right\}.$$
where $\lfloor \cdot \rfloor$ is the floor function and consider the linear operator
\begin{equation}\label{operadorLmulti}
L_{p}(\bar{f}_{\mathbf{n},p})=\sum_{j_1,j_2=-\left\lfloor \frac{p}{2} \right\rfloor}^{\left\lfloor \frac{p}{2} \right\rfloor} c_{p,j_1}c_{p,j_2} \bar{f}_{n_1+j_1,n_2+j_2},
\end{equation}
where the coefficients $c_{p,j}$, $j=-\left\lfloor \frac{p}{2} \right\rfloor,\hdots,\left\lfloor \frac{p}{2} \right\rfloor$ are:
\begin{equation}\label{coeficientesc}
c_{p,j}=\sum_{l=0}^{\left\lfloor \frac{p+1}{2} \right\rfloor-1}\frac{t(2l+p+1,p+1)}{{{2l+p+1}\choose{p+1}}}\sum_{i=0}^{2l}\frac{(-1)^i}{i!(2l-i)!}\delta_{l-i+\left\lceil\frac{p+1}{2}\right\rceil,j+1+\left\lfloor\frac{p}{2}\right\rfloor},
\end{equation}
 $\delta_{i,j}$ the Kronecker delta and $t(i,j)$ are the central factorial numbers of the first kind (see \cite{speleers} and \cite{butzeretal}) which can be computed recursively as:
\begin{equation*}
t(i,j)=\left\{
         \begin{array}{ll}
           0, & \hbox{if}\,\, j>i, \\
           1, & \hbox{if}\,\, j=i, \\
           t(i-2,j-2)-\left(\frac{i-2}{2}\right)^2t(i-2,j), & \hbox{if} \,\, 2\leq j <i,
         \end{array}
       \right.
\end{equation*}
with
$$t(i,0)=0,\quad t(i,1)=\prod_{l=1}^{i-1}\left(\frac{i}{2}-l\right),\,\, i\geq 2,$$
and $t(0,0)=t(1,1)=1, t(0,1)=t(1,0)=0$. We show some values of $c_{p,j}$ in Table \ref{tablaCs}.

\begin{table}[h]
  \begin{center}
   \begin{tabular}{crrrrrrr}\hline
                             & $c_{1,j}$ & $c_{2,j}$ & $c_{3,j}$ & $c_{4,j}$& $c_{5,j}$   \\   \hline
                 $ j=0$      &  $1$  & $5/4 $    & $4/3$    & $319/192$  & $73/40$     \\
                 $ j=1$      &       & $-1/8 $   & $-1/6$   & $-107/288$ & $-7/15$     \\
                 $ j=2$      &       &           &          & $47/1152$  & $13/240$    \\ \hline
   \end{tabular}
\medskip
    \caption{$c_{p,j}$ values for $p=1,\hdots,5$. They are symmetric, $c_{p,j}=c_{p,-j}$, $j=0,\hdots,\left\lfloor\frac{p}{2}\right\rfloor$.}
    \label{tablaCs}
  \end{center}
\end{table}

With these ingredients, in \cite{ALRY}, it is proved that  if $f$ is $C^{p+1}$ smooth, the operator
\begin{equation}\label{Qqmulti}
\mathcal{Q}_{p}(\bar{f})(x,y)=\sum_{\mathbf{n}\in\mathbb{Z}^2} L_{p+1}(\bar{f}_{\mathbf{n},p+1})B_{p}\left(\frac{x}{h}-n_1,\frac{y}{h}-n_2\right)
\end{equation}
approximates $f$ with order of accuracy $O(h^{p+1})$.
\begin{equation}\label{Qerror}
|f(x,y)-\mathcal{Q}_{p}(\bar{f})(x,y)|\le Ch^{p+1}.
\end{equation}

In the present context, we are going to use the above approximation operator with $p=3$, aiming at a global $O(h^4)$ approximation. Hence, we shall provide two bicubic spline approximations, one on $\tilde{\Omega}_1$ and the other on $\tilde{\Omega}_2$. Considering the operator $\mathcal{Q}_{3}$, in view of the definition of
$L_{3}$ and of the support of the cubic B-spline, it follows that the computation of $\mathcal{Q}_{3}(\bar{f})(x,y)$ at a given point $(x,y)$ requires the values of the cell-average values at $7\times 7$ cells surrounding that point. This presents a problem if the point $(x,y)$ is near the boundary of the domain, or near the singularity curve $\Gamma$. Such a situation is displayed in Figure \ref{extend}, where the approximation at the point marked by ${\bf x}$ requires cell-average data at all the $7\times 7$ cells within the square black frame. Some of these cells are on the other side of the singularity curve. Below we suggest a constructive approach for solving this problem, assuming the following restrictions hold:

W.L.O.G., let us consider a point $(x,y)\in\tilde{\Omega}_1$, and assume that one of the cell-average values required for computing $\mathcal{Q}_{3}(\bar{f_1})(x,y)$ is either missing since $(x,y)$ is near the boundary of $[0,1]^2$, or, it is not a cell-average of $f_1$ since the cell intersects $\tilde{\Omega}_2$. We refer to such cells as non-valid cells, distinguishing from valid cells which are included in $\tilde{\Omega}_1$.

We shall re-define the value of a non-valid cell by a simple extrapolation of valid cell-average values $\bar f_1$, as follows: Considering such a non-valid cell, we look for the valid cell which is horizontally or vertically the closest to it. In Figure \ref{extend} we assume $\tilde{\Omega}_1$ is to the left of the curve, and the two cells marked by black dots are thus non-valid. Corresponding to the chosen direction, we interpolate four consecutive values of valid cell-averages (marked by red dots) by a cubic polynomial and use it for approximating the value of the cell-average at the non-valid cell. Assuming $f_1$ is $C^4$, the resulting extrapolated cell-average value approximates the exact cell average of the extension of $f_1$ within an $O(h^4)$ accuracy.

\begin{figure}[!ht]
    \includegraphics[width=3in]{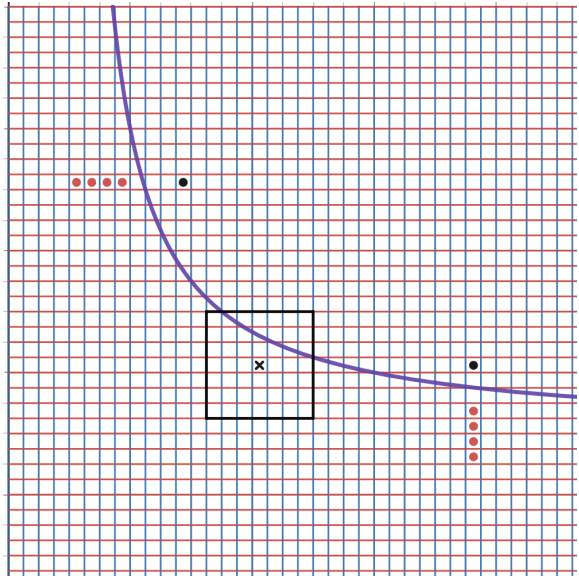}
    \caption{Function extension algorithm display}
    \label{extend}
\end{figure}

We summarize the algorithm in two steps:
\begin{itemize}
\item Firstly, we smoothly extend the cell-average data from $\tilde{\Omega}_1$ and from $\tilde{\Omega}_2$.
\item We use the quasi-interpolation operator $\mathcal{Q}_{3}$ in each part to calculate the approximation of the functions $f_1$ and $f_2$.
\end{itemize}
The resulting approximation is defined as:
\begin{equation}\label{final}
\tilde f(x,y)\equiv \mathcal{Q}_{3}(\bar{f}_1)(x,y){\bf 1}_{\tilde\Omega_1}(x,y)+\mathcal{Q}_{3}(\bar{f}_2)(x,y){\bf 1}_{\tilde\Omega_2}(x,y).
\end{equation}

We note that the approximation rate  $O(h^{p+1})$ of $\mathcal{Q}_{p}$ in (\ref{Qerror}) remains unchanged if the values of the cell-averages used for calculating $L_{p+1}$ contains $O(h^{p+1})$ errors. Therefore, considering $f_1$ and $f_2$ to be the extensions assumed in Assumption \ref{extension}, it follows that:

\begin{equation}\label{Q3error1}
|\big(f_1(x,y)-\mathcal{Q}_{3}(\bar{f}_1)(x,y)\big){\bf 1}_{\tilde\Omega_1}(x,y)|\le Ch^{4},
\end{equation}

\begin{equation}\label{Q3error2}
|\big(f_2(x,y)-\mathcal{Q}_{3}(\bar{f}_2)(x,y)\big){\bf 1}_{\tilde\Omega_2}(x,y)|\le Ch^{4}.
\end{equation}

In the same way, we can construct an approximation of the function $f_1$ or $f_2$ for any order of accuracy $p+1$ using the operator $\mathcal{Q}_p$, Equation \eqref{Qqmulti}, always taking into account the Assumption \ref{extension}.

In view of the above approximation results, together with Corollary \ref{gammacor}, we conclude the following compact result:

\begin{corollary}\label{dHcor}
Let $G(f)$ be the graph of $f$ and let $G(\tilde f)$ be the graph of the approximation in (\ref{final}), then
\begin{equation}
d_H(G(f), G(\tilde f))=O(h^3),
\end{equation}
as $h\to 0$, where $d_H$ is the Hausdorff distance.
\end{corollary}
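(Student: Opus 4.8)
The plan is to bound each of the two one-sided distances, $\sup_{P\in G(f)}\inf_{\tilde P\in G(\tilde f)}|P-\tilde P|$ and its symmetric counterpart, and show that both are $O(h^3)$. I would begin by partitioning $[0,1]^2$ into the \emph{agreement region} $A=(\tilde\Omega_1\cap\Omega_1)\cup(\tilde\Omega_2\cap\Omega_2)$, on which $\tilde f$ evaluates the correct smooth piece of $f$, and the \emph{mismatch strip} $S=[0,1]^2\setminus A$, which is exactly the region lying between $\Gamma$ and $\tilde\Gamma$. By Corollary \ref{gammacor} we have $dist(\Gamma,\tilde\Gamma)=O(h^3)$, so $S$ is contained in an $O(h^3)$-neighborhood of $\Gamma$; in particular every point of $S$ lies within distance $O(h^3)$ of both $\Gamma$ and $\tilde\Gamma$.

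On the agreement region the estimate is immediate. For $(x_0,y_0)\in\Omega_1\cap\tilde\Omega_1$ we have $f(x_0,y_0)=f_1(x_0,y_0)$ and $\tilde f(x_0,y_0)=\mathcal{Q}_3(\bar f_1)(x_0,y_0)$, so (\ref{Q3error1}) gives $|f(x_0,y_0)-\tilde f(x_0,y_0)|\le Ch^4$; the case $\Omega_2\cap\tilde\Omega_2$ is identical via (\ref{Q3error2}). Hence the vertical segment joining $(x_0,y_0,f(x_0,y_0))\in G(f)$ to $(x_0,y_0,\tilde f(x_0,y_0))\in G(\tilde f)$ already realizes a distance $O(h^4)$, and symmetrically.

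The crux is the mismatch strip, and here I would exploit the fact that the Hausdorff distance between graphs does \emph{not} require vertical proximity: we are free to move horizontally. Take $P=(x_0,y_0,f(x_0,y_0))\in G(f)$ with $(x_0,y_0)\in S$; say $(x_0,y_0)\in\Omega_1$ but $(x_0,y_0)\in\tilde\Omega_2$, so $f(x_0,y_0)=f_1(x_0,y_0)$ while $\tilde f$ there evaluates the wrong piece and the vertical gap may be $O(1)$. Instead I would choose a nearby point $(x_1,y_1)\in\tilde\Omega_1$ with $|(x_1,y_1)-(x_0,y_0)|=O(h^3)$, which exists because $(x_0,y_0)$ is within $O(h^3)$ of $\tilde\Gamma$. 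At this point $\tilde f(x_1,y_1)=\mathcal{Q}_3(\bar f_1)(x_1,y_1)$, and combining the quasi-interpolation bound (\ref{Q3error1}) with the Lipschitz continuity of the $C^4$ extension $f_1$ from Assumption \ref{extension}(1),
\begin{equation*}
|\tilde f(x_1,y_1)-f_1(x_0,y_0)|\le |\mathcal{Q}_3(\bar f_1)(x_1,y_1)-f_1(x_1,y_1)|+|f_1(x_1,y_1)-f_1(x_0,y_0)|\le Ch^4+L\,O(h^3)=O(h^3).
\end{equation*}
Thus $(x_1,y_1,\tilde f(x_1,y_1))\in G(\tilde f)$ lies within $O(h^3)$ of $P$ in $\mathbb{R}^3$. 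The symmetric direction, starting from a point of $G(\tilde f)$ over the strip and shifting to the correct side of $\Gamma$, is identical with the roles of $f$ and $\tilde f$ interchanged. Combining the agreement-region and strip estimates in both directions yields $d_H(G(f),G(\tilde f))=O(h^3)$.

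I expect the main obstacle to be technical bookkeeping rather than conceptual: ensuring the quasi-interpolation bounds (\ref{Q3error1})--(\ref{Q3error2}) hold \emph{uniformly up to} $\tilde\Gamma$ and up to $\partial[0,1]^2$ — which is precisely what the smooth cell-average extension of Section 5.2 together with Assumption \ref{extension}(1) guarantee — and verifying that a correct-side point $(x_1,y_1)$ at distance $O(h^3)$ genuinely exists everywhere along the strip. The latter relies on the curvature and non-self-intersection hypotheses on $\Gamma$ from Proposition \ref{gammaprop}, which keep the $O(h^3)$-strip a well-behaved tubular neighborhood for small $h$, so that crossing $\tilde\Gamma$ transversally costs only $O(h^3)$.
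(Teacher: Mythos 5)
Your proof is correct and matches the paper's intended argument: the paper states this corollary without a detailed proof, deducing it directly from the quasi-interpolation bounds (\ref{Q3error1})--(\ref{Q3error2}) and the curve estimate of Corollary \ref{gammacor}, which is exactly the combination you make rigorous. Your decomposition into the agreement region (vertical $O(h^4)$ bound) and the mismatch strip (horizontal $O(h^3)$ shift across $\tilde\Gamma$, using Lipschitz continuity of the extended smooth pieces) is the natural fleshing-out of that one-line deduction.
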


\section{Conclusions}

In this paper, we have presented a global and explicit reconstruction of a piecewise smooth function in two dimensions from cell-averages data. The main contributions, apart from the improvement in the approximation order, are the global and explicit nature of the resulting approximation.

The reconstruction is adapted to the presence of a smooth jump singularity curve $\Gamma$. We have proposed a global order three approximation of $\Gamma$. The principal ingredients are a signature-based detection with third-order local approximations which are exact for piecewise linear functions with quadratic edges. The local approximations are then used to define an implicit global third-order approximation of $\Gamma$.
Furthermore, new approximation operators are used for defining global explicit spline approximations to the smooth parts of the function directly from the cell-average data.

Our curve approximation results apply for smooth singularity curves and non-vanishing jumps in the function. The more general case, of curves with corners and possible zero jumps at some points along a singularity curve, is a challenging future project.


\clearpage

\section*{Acknowledgements}
The first and third author have been supported through project 20928/PI/18 (Proyecto financiado por la Comunidad Aut\'onoma de la Regi\'on de Murcia a trav\'es de la convocatoria de Ayudas a proyectos para el desarrollo de investigaci\'on cient\'ifica y t\'ecnica por grupos competitivos, incluida en el Programa Regional de Fomento de la Investigaci\'on Cient\'ifica y T\'ecnica (Plan de Actuaci\'on 2018) de la Fundaci\'on S\'eneca-Agencia de Ciencia y Tecnolog\'ia de la Regi\'on de Murcia) and by the national research project PID2019-108336GB-I00. The fourth author
has been supported by grant MTM2017-83942 funded by Spanish MINECO and by grant PID2020-117211GB-I00 funded by MCIN/AEI/10.13039/501100011033.

\end{document}